\def\E{{\bf E}}
\def\Z{{\mathcal{Z}}}
\def\C{{\mathbb{C}}}
\def\D{{\mathbb{D}}}
\def\Cir{{\mathcal{C}}}
\def\Ind{{\mathbb{1}}}
\title{On the Distribution of Critical Points of a Polynomial}
\author{Sneha Dey Subramanian}
\thanks{Ph.D. candidate, Department of Mathematics, University of Pennsylvania, 209 S. 33rd Street, Philadelphia, PA-19104, U.S.A. Email: ssneha@math.upenn.edu}
\begin{document}
\maketitle

\newtheorem*{thmnnum}{Theorem}
\newtheorem{prop2}{Proposition}[section]
\newtheorem*{propo}{Proposition}
\newtheorem{lem1}[prop2]{Lemma}
\newtheorem{thmnum}[prop2]{Theorem}
\newtheorem{lem2}{Lemma}[section]
\newtheorem{prop3}[lem2]{Proposition}

\begin{abstract}
This paper proves that if points $Z_1,Z_2,...$ are chosen independently and identically using some measure $\mu$ from the unit circle in the complex plane, with $p_n(z) = (z-Z_1)(z-Z_2)...(z-Z_n)$, then the empirical distribution of the critical points of $p_n$ converges weakly to $\mu$.
\end{abstract}
\section{Introduction}
Across many fields of mathematics, one of the fundamental questions about a function is the location of its zeros. Entire fields such as algebraic geometry and the emergent study of stable functions have locations of zeros as their focus.\\

The relation between the zeros of a function and the zeros of its derivative (the critical points) is interesting and not always obvious.  In the case where all zeros are real, Rolle's theorem tells us that the zeros of the derivative interlace the zeros of the function itself.  In the case of complex polynomials the analogous result is the Gauss-Lucas theorem which states that the zeros of the derivative of $f$ must lie in the convex hull of the zeros of $f$ and gives a representation of the zeros of $f'$ as convex combinations of the zeros of $f$. A corollary of this is that differentiating preserves stability.  Differentiation is also known never to increase the number of non-real zeros of a polynomial.\\

Two famous conjectures in this area are the conjectures of Sendov and Smale. The former, made by Blagovest Sendov during the 1950's, states that if the roots $z_1,z_2,...,z_n$ of a polynomial all lie inside the closed unit disc, then for each root of the polynomial, the closed unit disc centered at the root must contain at least one critical point. The latter, made by Steve Smale, states that if $f$ is a polynomial of degree $n$ with at least one root $0$ and $f'(0)\neq0$, then,
\begin{align*}
\min\left\{\frac{|f(\xi)|}{|\xi||f'(0)|}:f'(\xi)=0\right\}\leq K,
\end{align*}
where $K=1$ or $\frac{n-1}{n}$. Sendov's conjecture has been proven for the case when $z_1,z_2,...,z_n$ all lie on the unit circle, whereas Smale's conjecture has been proven for when $f$ has all its roots, save $0$, on the unit circle. The most general forms of these conjectures are still unsolved. More information on these conjectures and proofs of some of the special cases can be found in \cite{RahSch}.\\

Recent work in random marix theory has put forward numerous connections between the zeros and critical points of Riemann zeta function and those of the characteristic polynomial of a unitary matrix in the Circular Unitary Ensemble. While Keating and Snaith in \cite{keatsna} conjectured values for all even moments of Riemann zeta function on the critical line, Due\~nez et al. (\cite{randmat}) compared the horizontal distribution of critical points of the Riemann zeta function to the radial distribution of critical points of the characteristic polynomial of a random unitary matrix.\\

A probabilistic study on the roots of derivatives of polynomials was done by Pemantle and Rivin in \cite{PemRiv}.  Let $f$ be a polynomial with $n$ roots that are chosen independently and uniformly from a measure $\mu$ on the complex plane. They conjectured that the empirical distribution of the roots of $f'$ converges weakly to $\mu$ as $n \to \infty$.  They prove this in the special case when $\mu$ has finite 1-energy, namely when $\mu$ satisfies
\begin{align*}
\int\int\frac{1}{|z-w|}d\mu(z)d\mu(w) < \infty.
\end{align*}
This condition cannot hold, however, when $\mu$ is supported on any set of dimension 1 or less.  The aim of the present paper is to extend their result to the case of any measure supported on the unit circle.\\

The author would like to mention that while this paper was being refereed, a proof of the Pemantle-Rivin conjecture in the general case was found in \cite{Zakhar}, along very different lines from the approach taken here.\\

\section{Notations and Background}
Say, $Z_1,Z_2,...$ is a sequence of points chosen i.i.d. with respect to some distribution $\mu$ on the unit circle. Write, $Z_k = \exp(2\pi i \theta_k)$, so that $\{ \theta_k \}$ is a collection of IID random variables whose common law is supported on $[0,1]$, which we denote by $\nu$.\\

Let
\begin{align*}
p_n(z) = (z-Z_1)(z-Z_2)...(z-Z_n),
\end{align*}
and $y_1^{(n)}, y_2^{(n)},...,y_{n-1}^{(n)}$ be the roots of $p_n'(z)$.\\

For $k\geq1$, let $c_k = \E(Z^k)$, where $Z \sim \mu$. 
Denote by $\Z(f)$ the empirical distribution of the roots of a random polynomial $f$. That is, if $f$ has roots $X_1,X_2,...,X_m$, then $\Z(f) = \frac{1}{m}\sum_{j=1}^{m}\delta_{X_j}$.\\

We shall write $\D$ for the open unit disc, and $\Cir$ for the unit circle.\\

In their paper, \cite{PemRiv}, the authors conjectured that, for any distribution $\mu$ on the closed unit disc, $\Z(p_n')$ converges weakly to $\mu$. That paper also proves the following proposition.
\begin{prop2}\label{pemrivth}
Let $\mu$ be the uniform measure on ${\Cir}$.  Then ${\Z} (p_n')$ converges to ${\Cir}$ in probability, that is, $P({\Z} (S) \geq \epsilon) \to 0)$ for any $\epsilon > 0$ and any closed set $S\subset\D$, disjoint from $\Cir$.\qed
\end{prop2}

In this note, we shall generalize this to prove that
\begin{lem1}\label{pemrivgen}
For any distribution $\mu$ on $\Cir$, $\Z(p_n')$ converges to $\Cir$ in probability. In fact, if $\mu$ is not uniform on $\Cir$, the convergence is almost everywhere.
\end{lem1}
The above leads us to prove our main result, which is a special case of the aforementioned conjecture in \cite{PemRiv}:
\begin{thmnum}\label{main}
For any distribution $\mu$ on $\Cir$, $\Z(p_n')$ converges weakly to $\mu$ on $\Cir$.
\end{thmnum}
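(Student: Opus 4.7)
The plan is to reduce the theorem to the ``positive'' moment convergence
\[
\frac{1}{n-1}\sum_{j=1}^{n-1}\bigl(y_j^{(n)}\bigr)^k \;\longrightarrow\; c_k \qquad (k\geq 1),
\]
and then to feed this into Lemma \ref{pemrivgen} via tightness. Since $\bar{\D}$ is compact, the family $\{\Z(p_n')\}$ is automatically tight, so it suffices to show that every weak subsequential limit $\nu^*$ equals $\mu$. Lemma \ref{pemrivgen} forces any such $\nu^*$ onto $\Cir$, where $\bar z = z^{-1}$; the identity $\int z^k d\nu^* = c_k$ for every $k\geq 0$, combined with the conjugation symmetry $\hat{\nu^*}(-k) = \overline{\hat{\nu^*}(k)}$ of any positive real measure and the fact that $\overline{c_k} = c_{-k}$ (valid because $|Z|=1$), then fixes every Fourier coefficient of $\nu^*$, forcing $\nu^* = \mu$.

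To produce the moment convergence, I would start from the exact coefficient-matching identity
\[
e_k\bigl(y_1^{(n)},\ldots,y_{n-1}^{(n)}\bigr) \;=\; \frac{n-k}{n}\,e_k(Z_1,\ldots,Z_n), \qquad 0\leq k\leq n-1,
\]
which in terms of $E(X;t):=\prod_i(1+X_it)$ packages as $E(y;t) = E(Z;t) - \tfrac{t}{n}E'(Z;t)$. Taking $\tfrac{d}{dt}\log$ yields
\[
\sum_{j=1}^{n-1}\frac{y_j^{(n)}}{1+y_j^{(n)}t} \;=\; G(t) \;-\; \frac{(tG(t))'}{n - tG(t)}, \qquad G(t) := \sum_{i=1}^n \frac{Z_i}{1+Z_it},
\]
and writing $h(t):=tG(t) = n\tilde h(t)$ with $\tilde h(t) = \sum_{k\geq 1}(-1)^{k-1}t^k(P_k(Z)/n)$ causes the factor of $n$ to cancel and turns the correction term into $\tilde h'(t)/(1-\tilde h(t))$. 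Using the standard expansion $E'/E = \sum_{k\geq 1}(-1)^{k-1}P_k(\cdot)t^{k-1}$ and equating coefficients of $t^{k-1}$ then gives, as an exact identity for each $n$,
\[
P_k\bigl(y^{(n)}\bigr) \;=\; P_k(Z) \;+\; R_k\!\left(\frac{P_1(Z)}{n},\ldots,\frac{P_k(Z)}{n}\right),
\]
where $P_m(X)=\sum_i X_i^m$ and $R_k$ is a fixed polynomial independent of $n$. The strong law of large numbers gives $P_m(Z)/n \to c_m$ almost surely for each fixed $m$, so $R_k$ stays bounded a.s.; dividing by $n-1$ therefore produces $P_k(y^{(n)})/(n-1) \to c_k$ a.s., which is the required moment limit.

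The main obstacle is the algebraic middle step: verifying that the correction $(tG(t))'/(n-tG(t))$ really does expand as a formal power series in $t$ whose coefficients are $n$-independent polynomials in $P_m(Z)/n$. Analytically this rational function has no fixed radius of convergence in $t$ (because $G(t)$ scales with $n$), so the manipulation must be carried out in $\C[[t]]$; the clean $n$-independence comes only after the rewriting $h(t) = n\tilde h(t)$ with $\tilde h(0)=0$, which makes $1-\tilde h(t)$ formally invertible. Once this coefficient-by-coefficient bookkeeping is done, the rest of the argument reduces to the Law of Large Numbers for the power sums of the $Z_i$ and the already-proved Lemma \ref{pemrivgen}; no further concentration estimates or complex-analytic input is needed.
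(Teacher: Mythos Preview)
Your proof is correct and arrives at the same conclusion as the paper, but both of its two stages take a genuinely different route. For the moment convergence $\frac{1}{n-1}\sum_j (y_j^{(n)})^k\to c_k$, the paper (Proposition~\ref{myprop}) invokes the Komarova--Rivin/Cheung--Ng companion matrix and expands the trace of its $k$th power; your generating-function identity $E(y;t)=E(Z;t)-\tfrac{t}{n}E'(Z;t)$ followed by the logarithmic derivative yields the same exact relation $P_k(y^{(n)})=P_k(Z)+R_k\bigl(P_1(Z)/n,\dots,P_k(Z)/n\bigr)$ with $R_k$ independent of $n$, and does so without importing that external result, making the argument more self-contained. (Your worry about analyticity is in fact unnecessary: since $|h(t)|\le n|t|/(1-|t|)$, one has $n-h(t)\neq 0$ uniformly for $|t|<1/2$, so the correction $h'/(n-h)=\tilde h'/(1-\tilde h)$ is even analytically bounded there; but the formal-power-series reading you give is perfectly valid.) For the passage from moments to weak convergence, the paper writes $y_j^{(n)}=r_j^{(n)}e^{2\pi i\phi_j^{(n)}}$, uses Lemma~\ref{pemrivgen} to force $\frac{1}{n-1}\sum_j (r_j^{(n)})^k\to 1$, transfers the limit to the angular variables, and then runs a Weyl/Stone--Weierstrass argument on trigonometric polynomials; your tightness-plus-Fourier-uniqueness argument is shorter and more conceptual, though it leans on Lemma~\ref{pemrivgen} in exactly the same way (to pin any subsequential limit onto $\Cir$, without which $\delta_0$ or any rotation-invariant measure on $\bar\D$ could not be excluded in the uniform case). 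One minor point worth making explicit: when $\mu$ is uniform, Lemma~\ref{pemrivgen} is only asserted in probability, so your pathwise subsequential-limit argument needs the standard device of passing to an a.s.\ convergent subsubsequence; the paper handles this case by simply downgrading every almost-sure statement to convergence in probability.
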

The proof, as shall be seen in forthcoming sections, can be divided in to two parts, the latter following a pattern similar to the proof of Weyl's equidistribution criterion (see, for example \cite{chandr}). The former requires the following theorem (proved both in \cite{komriv} and in \cite{ChNg}) regarding a companion matrix of the critical points.
\begin{propo}\label{companion}
If $z_1, z_2, ..., z_n \in \C$, and $y_1, y_2, ..., y_{n-1}$ are the critical points of the polynomial $p_n(z)=(z-z_1)(z-z_2)...(z-z_n)$, then, the matrix
\begin{align}
D\left(I-\frac{J}{n}\right)+\frac{z_n}{n}J
\end{align}
has $y_1, y_2, ..., y_{n-1}$ as its eigenvalues, where $D=diag(z_1,z_2,...,z_{n-1})$, $I$ is the identity matrix of order $n-1$ and $J$ is the $(n-1)\times(n-1)$ matrix of all entries 1.\qed\\\\
\end{propo}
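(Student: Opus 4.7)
The plan is to compute the characteristic polynomial $\det(\lambda I - M)$ of the matrix $M = D(I - J/n) + (z_n/n)J$ directly and show it equals $p_n'(\lambda)/n$, which, being monic of degree $n-1$, has precisely the $y_i$ as its roots. First I would write out entries: since $J$ is all-ones, the $(i,j)$ entry of $M$ is $z_i\delta_{ij} + (z_n - z_i)/n$. This reveals that $\lambda I - M$ is a diagonal matrix plus a rank-one correction. Concretely,
\begin{align*}
\lambda I - M = \operatorname{diag}(\lambda - z_1,\dots,\lambda - z_{n-1}) - \tfrac{1}{n}\,u\,v^{T},
\end{align*}
where $u_i = z_n - z_i$ and $v$ is the all-ones column vector.

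The second step is to apply the matrix determinant lemma (treating $\lambda$ as generic so the diagonal part is invertible; the identity then extends by continuity or by polynomial identity, since both sides are polynomials in $\lambda$). This yields
\begin{align*}
\det(\lambda I - M) = \prod_{i=1}^{n-1}(\lambda - z_i)\cdot \left(1 - \frac{1}{n}\sum_{i=1}^{n-1}\frac{z_n - z_i}{\lambda - z_i}\right).
\end{align*}

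The third step is to simplify the parenthesized scalar by writing $z_n - z_i = (z_n - \lambda) + (\lambda - z_i)$ inside the sum. Telescoping the two pieces and then invoking the logarithmic derivative identity $p_n'(\lambda)/p_n(\lambda) = \sum_{i=1}^{n}1/(\lambda - z_i)$, one should find that the scalar factor collapses to $(\lambda - z_n)\,p_n'(\lambda)/\bigl(n\,p_n(\lambda)\bigr)$. Multiplying by the diagonal product $\prod_{i=1}^{n-1}(\lambda - z_i) = p_n(\lambda)/(\lambda - z_n)$ then gives the clean identity $\det(\lambda I - M) = p_n'(\lambda)/n$, finishing the proof since $p_n'(\lambda)/n$ is the monic polynomial whose roots are the critical points of $p_n$.

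There is no real obstacle here beyond careful bookkeeping; the only mild subtlety is the use of the matrix determinant lemma at values of $\lambda$ where $\lambda - z_i = 0$ for some $i$, which I would handle by noting that both sides of the identity are polynomials in $\lambda$, so agreement on the Zariski-dense set where the diagonal part is invertible implies agreement everywhere. The crucial structural observation powering the whole argument is that the matrix $M$ is a diagonal matrix perturbed by a rank-one term, which is precisely what makes the characteristic determinant computable in closed form.
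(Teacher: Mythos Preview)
Your argument is correct. The entry computation, the rank-one decomposition $\lambda I - M = \operatorname{diag}(\lambda - z_i) - \tfrac{1}{n}uv^{T}$, the application of the matrix determinant lemma, and the algebraic simplification via $z_n - z_i = (z_n - \lambda) + (\lambda - z_i)$ all check out and lead cleanly to $\det(\lambda I - M) = p_n'(\lambda)/n$. Your handling of the degenerate values of $\lambda$ by the polynomial-identity argument is also the right way to dispose of that issue.

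There is, however, nothing in the paper to compare against: the paper does not prove this proposition at all. It is stated with a \qed\ and attributed to \cite{komriv} and \cite{ChNg}, and is then used as a black box in the proof of Proposition~\ref{myprop}. So your write-up supplies a self-contained proof where the paper simply imports the result from the literature. For what it is worth, the argument in \cite{ChNg} proceeds along essentially the same lines you outline --- recognizing the matrix as diagonal plus rank one and reducing the characteristic polynomial computation accordingly --- so your approach is in the same spirit as the original source, even though the present paper itself offers no proof.
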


\section{Proofs of Lemma \ref{pemrivgen} and Theorem \ref{main}}
We first begin by proving a small lemma.
\begin{lem2}
Let $\mu$ be a distribution on the unit circle $\Cir$ with $c_k = \E(Z^k)$, where $Z\sim\mu$. Then $c_k = 0$ for all $k \geq 1$ if and only if $\mu$ is uniform on $\Cir$.
\end{lem2}
\begin{proof}
Clearly if $\mu$ is uniform on $\Cir$ then $c_k=0$ for all $k\geq 1$. Now say $\mu$ is not uniform on the circle but we still have $c_k = 0$ for all $k\geq1$. Then the law $\nu$ is not uniform on $[0,1]$. Now, if $Z_1, Z_2, ...$ are points on $\Cir$, chosen i.i.d. using $\mu$, and if we write $Z_j = \exp(2\pi i \theta_j), j=1,2,...$, then $\theta_1, \theta_2,...$ are points in $[0,1]$ that are i.i.d. $\nu$.\\\\
By the Strong Law of Large Numbers, for all $k\geq1$,
\begin{align*}
\frac{Z_1^k+Z_2^k+...Z_n^k}{n}\stackrel{a.s.}\longrightarrow 0,
\end{align*}
and so by Weyl's criterion, for any $0\leq a < b \leq 1$,
\begin{align*}
\frac{\sum_{j=1}^n\Ind_{\{\theta_j\in[a,b]\}}}{n} \stackrel{a.s.}\longrightarrow b-a.
\end{align*}
But $\Ind_{\{\theta_j\in[a,b]\}}, j=1,2,...$ are i.i.d. random variables taking values 0 or 1 with expectation $\nu([a,b])$. Therefore,
\begin{align*}
\frac{\sum_{j=1}^n\Ind_{\{\theta_j\in[a,b]\}}}{n} \stackrel{a.s.}\longrightarrow \nu([a,b]).
\end{align*}
Since $\nu$ is not uniform on $[0,1]$, we have arrived at a contradiction. So, there must exist at least one non-zero $c_k$.
\end{proof}

We proceed to use this fact for the proof of Lemma \ref{pemrivgen}.\\
\begin{proof}[Proof of Lemma \ref{pemrivgen}]
Assume $\mu$ is not the uniform distribution on the circle (as the uniform case has been taken care of in \cite{PemRiv}). Then, as mentioned above, there is at least one non-zero $c_k$. Thus the power series function $f(z) = \sum_{k=0}^\infty \bar{c}_{k+1} z^k$ exists at every point $z\in\D$, is analytic there (since $|c_k|<1, \forall k$), and so has only finitely many zeros inside any $r$-ball, where $r<1$.\\

Define
\begin{align*}
V_n(z) = \frac{p_n'(z)}{np_n(z)} = \frac{1}{n}\sum_{j=1}^n\frac{1}{z-Z_j}.
\end{align*}
$V_n$ has $n-1$ zeros, which are exactly the zeros of $p_n'(z)$, and $n$ poles, which are exactly the zeros of $p_n(z)$. Thus $V_n(z)$ is analytic inside $\D$. We shall show that as $n\to\infty$, $V_n$ converges inside the disc to $-f$, uniformly over compact sets. To see this, note that for $z\in\D$,
\begin{align*}
V_n(z) &= \frac{1}{n}\sum_{j=1}^n\frac{-1/Z_j}{1-z/Z_j} 
= -\frac{1}{n}\sum_{j=1}^n\sum_{k=0}^\infty\bar{Z}_j^{k+1}z^k = -\sum_{k=0}^\infty \bar{a}_n^{k+1}z^k,
\end{align*}
where, we write $a_n^{k+1}$ for the $k$th power sum average $\frac{Z_1^k+Z_2^k+...+Z_n^k}{n}$. By Strong Law of Large Numbers, $a_n^k \stackrel{a.s.}\longrightarrow c_k$ for all $k\geq 1$.\\

Let $0<r<1$. Given any $\delta>0, \exists K\geq1$ such that
\begin{align*}
\sum_{k=K}^\infty r^k = \frac{r^k}{1-r} < \frac{\delta}{4}.
\end{align*}
Corresponding to the chosen $K$, there exists an $N\geq1$ such that,
\begin{align*}
|a_n^k - c_k| < \frac{\delta(1-r)}{2},
\end{align*}
$\forall n\geq N$ and $\forall k=1,2,...,K-1$. Therefore, $\forall n\geq N$ and all $z\in B_r(0)$,
\begin{align*}
|V_n(z) + f(z)| &\leq \sum_{k=0}^{K-1}|a_n^k - c_k|r^k + \sum_{k=K+1}^{\infty}|a_n^k - c_k|r^k\\
&\leq \frac{\delta(1-r)}{2}\cdot(1+r+r^2+...+r^{K-1}) + 2\cdot\frac{\delta}{4} < \delta,
\end{align*}
which proves uniform convergence of $V_n$ to $-f$ over compact sets.\\


Using Hurwitz's theorem (see \cite{Conw}), given any $0<r<1$, there exists an $M\geq1$ for which $V_n$ and $f$ have the same number of zeros inside $B_r(0)$ for all $n\geq M.$ That is, $p_n'$ and $f$ shall have the same number of zeros inside $B_r(0)$ for all $n\geq M.$ But, as discussed above, $f$ has only finitely many zeros inside $B_r(0)$. Thus $\Z(p_n')$ converges to the unit circle almost surely.
\end{proof}

Our main result, Theorem \ref{main}, will be a consequence of the following proposition.\\
\begin{prop3}\label{myprop}
Given any sequence of points $z_1,z_2,...$ with $|z_n|\leq M$ for all $n$, and $\frac{z_1^k+z_2^k+...z_n^k}{n}\to c_k$ as $n\to\infty$, $\forall k\geq 1$, the critical points $y_1^{(n)}, y_2^{(n)},...,y_{n-1}^{(n)}$ of $p_n(z)=(z-z_1)(z-z_2)...(z-z_n)$ also satisfy
\begin{align*}
\frac{(y_1^{(n)})^k+ (y_2^{(n)})^k+...+(y_{n-1}^{(n)})^k}{n-1}\longrightarrow c_k\text{ as }n\to\infty,
\end{align*}
$\forall k\geq1$.
\end{prop3}
\begin{proof}
Note that, it is easy to see that this theorem holds true for $k=1$, because the average of the critical points is exactly equal to the average of the roots (by comparing the coefficients of $z^{n-1}$ in $p_n(z)$ with $z^{n-2}$ of $p_n'(z)$). To prove the result for general $k$, we use a result of \cite{komriv} (also appeared in \cite{ChNg}), mentioned as a proposition in Section 2, to see that for $k\geq2$, $(y^{(n)}_1)^k,(y^{(n)}_2)^k,...,(y^{(n)}_{n-1})^k$ are the eigenvalues of $[D\left(I-\frac{1}{n}J\right)+\frac{z_n}{n}J]^k$, and so,
\begin{align*}
(y^{(n)}_1)^k+(y^{(n)}_2)^k+...+(y^{(n)}_{n-1})^k = Tr\left[D\left(I-\frac{1}{n}J\right)+\frac{z_n}{n}J\right]^k.
\end{align*}
Note that the expansion of $[D\left(I-\frac{1}{n}J\right)+\frac{z_n}{n}J]^k$ is the sum of all terms such as
\begin{align}\label{ugly}
D^{l_1}\left(-\frac{DJ}{n}\right)^{l_2}\left(\frac{z_n}{n}J\right)^{l_3}D^{l_4}\left(-\frac{DJ}{n}\right)^{l_5}\left(\frac{z_n}{n}J\right)^{l_6}...D^{l_{3k-2}}\left(-\frac{DJ}{n}\right)^{l_{3k-1}}\left(\frac{z_n}{n}J\right)^{l_{3k}}
\end{align}
where the exponents $l_1,l_2,...,l_{3k}$ are non-zero integers, with $l_{3j-2}+l_{3j-1}+l_{3j}=1$ for all $j=1,2,..,k$. Clearly the number of such terms is $3^k$, which does not depend on $n$, and so, if we find that the trace of the matrix in the expression (\ref{ugly}) converges as $n\to\infty$ to $a_{l_1,l_2,...l_{3k}}$, then the trace of $[D\left(I-\frac{1}{n}J\right)+\frac{z_n}{n}J]^k$ converges to $\sum a_{l_1,l_2,...,l_{3k}}$.\\\\
Henceforth, we fix $l_1,l_2,...l_{3k}$. Now, note that $J^m = (n-1)^{m-1}J^{m-1}$ for any $m\geq 1$, and
\begin{align*}
(D^pJ)(D^qJ) &=
\left(\sum_{i=1}^{n-1}z_i^q\right) (D^pJ),
\end{align*}
for any $p,q\geq 0$.\\

The above tells us that there exists $p,q,s_0,s_1,s_2,...,s_{k-1}\geq0$ such that, term (\ref{ugly}) is of the form
\begin{align}\label{genterm}
(-1)^p\cdot z_n^q\cdot\left(\frac{n-1}{n}\right)^{s_0}\cdot\left(\frac{\sum_{i=1}^{n-1}z_i}{n}\right)^{s_1}\cdot\left(\frac{\sum_{i=1}^{n-1}z_i^2}{n}\right)^{s_2}\cdot...\cdot \left(\frac{\sum_{i=1}^{n-1}z_i^{k-1}}{n}\right)^{s_{k-1}}\cdot M,
\end{align}
where the numbers $p,q,s_0,s_1,...,s_{k-1}$ are determined solely by the $l_i$'s (and so, are independent of $n$).\\
 
Also, $M$ can only be one of the following terms: $D^k$ or $\frac{D^mJ}{n}$ or $\frac{D^{m_1}J}{n}D^{m_2}$ for some $m,m_1,m_2\geq 0$, which are fixed, $\leq k$, and dependent only on the $l_i$'s. Furthermore, the scalar coefficient in (\ref{genterm}) is always $O(1)$.\\

Observe that, if $M = D^k$, then the scalar coefficient in (\ref{genterm}) is equal to 1 and $\frac{Tr(M)}{n}\to c_k$. On the other hand, if $M=\frac{D^mJ}{n}$, then
\begin{align*}
Tr(M) = \frac{z_1^m+z_2^m+...+z_{n-1}^m}{n} = o(n),
\end{align*}
and if $M=\frac{D^{m_1}J}{n}D^{m_2}$,
\begin{align*}
Tr(M) &= Tr\left(D^{m_1+m_2}\frac{J}{n}\right)\\
&= \frac{z_1^{m_1+m_2}+ z_2^{m_1+m_2}+...+z_{n-1}^{m_1+m_2}}{n} = o(n).
\end{align*}

Thus,
\begin{align*}
\frac{Tr\left[D\left(I-\frac{1}{n}J\right)+\frac{z_n}{n}J\right]^k}{n} \longrightarrow c_k \text{ as }n\to\infty.
\end{align*}

\end{proof}


We now have all the tools required to prove our main result, namely Theorem \ref{main}.\\
\begin{proof}[Proof of Theorem \ref{main}]
Say we write,
\begin{align*}
y_j^{(n)} = r_j^{(n)}\exp(2\pi i \phi_j^{(n)}), j=1,2,...,n-1.
\end{align*}
The proof will consist of three major segments. Our first task is to prove that
\begin{align*}
\frac{1}{n-1}\sum_{j=1}^{n-1}(r_j^{(n)})^k\stackrel{P}\longrightarrow 1.
\end{align*}
In fact, unless $\mu$ is uniform on the circle, we will show that
\begin{align*}
\frac{1}{n-1}\sum_{j=1}^{n-1}(r_j^{(n)})^k\stackrel{a.s.}\longrightarrow 1.
\end{align*}
Next, we shall use the above information to show that
\begin{align*}
\frac{\exp(2k\pi i\phi_1^{(n)})+\exp(2k\pi i\phi_2^{(n)})+...+\exp(2k\pi i\phi_{n-1}^{(n)})}{n-1}\stackrel{P}\longrightarrow c_k.
\end{align*}
(Again, the convergence is almost sure, unless $\mu$ is uniform on $\Cir$.)\\

Finally, using arguments analogous to those in the proof of Weyl's equidistribution criterion, we shall arrive at our final result.\\

Assume, initially, that $\mu$ is not the uniform law on $\Cir$. For the first task as noted above, observe that, by Lemma \ref{pemrivgen}, given any $\epsilon > 0$,
\begin{align*}
\frac{1}{n-1}\sum_{j=1}^{n-1}\Ind_{\{r_j^{(n)}\in[1-\epsilon,1]\}}\stackrel{a.s.} \longrightarrow 1.
\end{align*}

Now, for any fixed positive integer $k$, $(1-\epsilon)^k\Ind_{\{r_j^{(n)}\in[1-\epsilon,1]\}}\leq (r_j^{(n)})^k \leq 1,$ and so
\begin{align}\label{inequ}
(1-\epsilon)^k \cdot \frac{1}{n-1}\sum_{j=1}^{n-1}\Ind_{\{r_j^{(n)}\in[1-\epsilon,1]\}} \leq \frac{1}{n-1}\sum_{j=1}^{n-1}(r_j^{(n)})^k \leq 1.
\end{align}

Clearly then, a simple squeeze theorem argument gives us
\begin{align}\label{raver}
\frac{1}{n-1}\sum_{j=1}^{n-1}(r_j^{(n)})^k\stackrel{a.s.}\longrightarrow 1.
\end{align}


Now, from Proposition \ref{myprop}, for any positive integer $k$,
\begin{align*}
&\frac{(y_1^{(n)})^k+(y_2^{(n)})^k+...+(y_{n-1}^{(n)})^k}{n-1} \stackrel{a.s.}\longrightarrow c_k,\\
\implies&\frac{(r_1^{(n)})^k\exp(2k\pi i\phi_1^{(n)})+(r_2^{(n)})^k\exp(2k\pi i\phi_2^{(n)})+...+(r_{n-1}^{(n)})^k\exp(2k\pi i\phi_{n-1}^{(n)})}{n-1}\stackrel{a.s.}\longrightarrow c_k.
\end{align*}

Note that (\ref{raver}) gives us that
\begin{align*}
\left|\frac{1}{n-1}\sum_{j=1}^{n-1}(1-(r_j^{(n)})^k)\exp(2k\pi i\phi_j^{(n)})\right|\leq \frac{1}{n-1}\sum_{j=1}^{n-1}(1-(r_j^{(n)})^k)\stackrel{a.s.}\longrightarrow 0,
\end{align*}
and so,
\begin{align}\label{weyluse}
\frac{\exp(2k\pi i\phi_1^{(n)})+\exp(2k\pi i\phi_2^{(n)})+...+\exp(2k\pi i\phi_{n-1}^{(n)})}{n-1}\stackrel{a.s.}\longrightarrow c_k.
\end{align}

Now, for the final stage of our proof,
\begin{align*}
c_k &= \E(Z^k), \text{ where, } Z\sim\mu.\\
\implies c_k &= \E(\exp(2k\pi i \Theta)) = \E(\cos(2k\pi\Theta)) + i\E(\sin(2k\pi\Theta)), \text{ where, } \Theta\sim\nu.
\end{align*}
So, (\ref{weyluse}) gives,
\begin{align*}
\frac{\cos(2k\pi\phi_1^{(n)})+\cos(2k\pi\phi_2^{(n)})+...+\cos(2k\pi\phi_{n-1}^{(n)})}{n}\stackrel{a.s.}\longrightarrow \E(\cos(2k\pi\Theta)),\\
\frac{\sin(2k\pi\phi_1^{(n)})+\sin(2k\pi\phi_2^{(n)})+...+\sin(2k\pi\phi_{n-1}^{(n)})}{n}\stackrel{a.s.}\longrightarrow \E(\sin(2k\pi\Theta)).
\end{align*}
Then, for any trigonometric polynomial $q(x)$,
\begin{align}\label{trig}
\frac{\sum_{j=1}^{n-1}q(\phi_j^{(n)})}{n} \stackrel{a.s.}\longrightarrow \E(q(\Theta)).
\end{align}

Let $f$ be a continuous real-valued function on [0, 1] and fix $\epsilon > 0$. By Stone-Weierstrass theorem (\cite{Stone}), there exists a trigonometric polynomial $q$ such that $|f - q| < \epsilon$. So,
\begin{align*}
\left|\frac{\sum_{j=1}^{n-1}f(\phi_j^{(n)})}{n} - \E(f(\Theta))\right| &\leq \left|\frac{\sum_{j=1}^{n-1}f(\phi_j^{(n)})}{n} - \frac{\sum_{j=1}^{n-1}q(\phi_j^{(n)})}{n}\right|\\
& + \left| \frac{\sum_{j=1}^{n-1}q(\phi_j^{(n)})}{n} - \E(q(\Theta))\right| + \E|q(\Theta)-f(\Theta)|.
\end{align*}
The first and third terms on the right hand side are each $<\epsilon$ while the second term goes to 0 almost surely, by (\ref{trig}). Hence for any $f$ continuous on [0, 1],
\begin{align}\label{finalnuni}
\frac{\sum_{j=1}^{n-1}f(\phi_j^{(n)})}{n}\stackrel{a.s.}\longrightarrow \E(f(\Theta)),
\end{align}
and this holds for complex-valued continuous functions as well (which is easily seen by comparing the real and imaginary parts). Thus, the joint empirical distribution of $\phi_j^{(n)}, j=1,2,...,n-1,$ converges weakly to $\nu$, which means that the joint empirical distribution of $\exp(2\pi i\phi_j^{(n)}), j=1,2,...,n-1,$ converges weakly to $\mu$. This, along with Lemma \ref{pemrivgen}, gives us the desired result for $\mu$ not uniform on $\Cir$.\\

Now suppose $\mu$ is the uniform law on the unit circle. Then,
\begin{align*}
\frac{1}{n-1}\sum_{j=1}^{n-1}\Ind_{\{r_j^{(n)}\in[1-\epsilon,1]\}}\stackrel{P} \longrightarrow 1,
\end{align*}
and as before, using (\ref{inequ}) we get,

\begin{align*}
\frac{1}{n-1}\sum_{j=1}^{n-1}(r_j^{(n)})^k\stackrel{P}\longrightarrow 1,
\end{align*}
for any positive integer $k$.\\

Note that the above is a slightly weaker version of (\ref{raver}), since the convergence is now in probability, and not almost sure.\\

For the rest of the proof, we can follow the same arguments as in the non-uniform case, except that the almost sure convergence in each of the statements will be replaced by convergence in probability. Thus we shall arrive at
\begin{align*}
\frac{\sum_{j=1}^{n-1}f(\phi_j^{(n)})}{n}\stackrel{P}\longrightarrow \E(f(\Theta)),
\end{align*}
for any continuous function $f:[0,1]\to\C$. Then, as before, the joint empirical distribution of $\phi_j^{(n)}, j=1,2,...,n-1,$ converges weakly to $\nu$ (which is the uniform law on $[0,1]$), and so, the joint empirical distribution of $\exp(2\pi i\phi_j^{(n)}), j=1,2,...,n-1,$ converges weakly to uniform on $\Cir$. Lemma \ref{pemrivgen} then gives us the desired result.
\end{proof}

\vspace{8 mm}
\section*{Acknowledgements}
The author is grateful to Robin Pemantle, Philip Gressman and Andreea Nicoara for stimulating discussions and helpful suggestions.
\vspace{8 mm}


\end{document}